\DeclareFontFamily{OT1}{rsfs}{}
\DeclareFontShape{OT1}{rsfs}{n}{it}{<-> rsfs10}{}
\DeclareMathAlphabet{\mathscr}{OT1}{rsfs}{n}{it}
\newcommand{\Z}{\mathbb{Z}}
\newcommand{\Q}{\mathbb{Q}}
\begin{document}

\newtheorem{thm}[subsection]{Theorem}
\newtheorem*{thm*}{Theorem}
\newtheorem{lem}[subsection]{Lemma}
\newtheorem{cor}[subsection]{Corollary}
\newtheorem*{cor*}{Corollary}
\newtheorem{prop}[subsection]{Proposition}

\theoremstyle{definition}
\newtheorem{lect}[subsection]{Lecture}
\newtheorem{rem}[subsubsection]{Remark}
\newtheorem{ex}[subsection]{Example}
\newtheorem{Def}[subsection]{Definition}
\newtheorem{exer}[subsection]{Exercise}
\newtheorem{conj}[subsection]{Conjecture}
\newtheorem{prob}[subsection]{Problem}
\newtheorem{ques}[subsection]{Question}

\title{Division Algebras With Infinite Genus}
\author{Jeffrey S. Meyer}

\maketitle

\section{Introduction}

\qquad 
\textit{To what extent does the maximal subfield spectrum of a division algebra determine the isomorphism class of that algebra?} It has been shown that over some fields a quaternion division algebra's isomorphism class is largely if not entirely determined by its maximal subfield spectrum.  However in this paper, we show that there are  fields for which the maximal subfield spectrum says little to nothing about a quaternion division algebra's isomorphism class.

\qquad Let $D$ be a division algebra with center $k$ and consider the collection of fields $E$ such that $k\subset E\subset D$.  The maximal elements, called \textbf{maximal subfields}, of this poset are always degree $\sqrt{\dim_k D}$ field extensions of $k$.  For each $D$, its \textbf{maximal subfield spectrum} is the set
$$\mathrm{Max}(D)=\left\{ \mbox{Isomorphism classes of maximal subfields of $D$}\right\}.$$
When two division algebras have the same maximal subfield spectrum, they are called \textbf{weakly isomorphic}.  In (\cite{PR}, 5.4) Prasad and Rapinchuk pose this following question: \textit{For what fields $k$ does weakly isomorphic imply isomorphic?}  They observe that over global fields, weakly isomorphic implies isomorphic.     However for even finitely generated fields, there are open questions.
In \cite{GS}, Garibaldi and Saltman produce an infinitely generated field over which there are two nonisomorphic weakly isomorphic quaternion division algebras.    With this in mind define \textbf{the genus of $D$} to be the set:
$$\mathrm{Gen}(D)=\left\{ \parbox{4.0in}{Isomorphism classes of algebras $D'$ weakly isomorphic to $D$}\right\}.$$
Chernousov, A. Rapinchuk, and I. Rapinchuk show in \cite{CRR} that over a finitely generated field, $\mathrm{Gen}(D)$ is finite.  
In this paper, we show that there are quaternion division algebras with infinite genus.  This is done by generalizing a construction of Garibaldi and Saltman in \cite{GS}.  However, it should be noted that the naive approach to generalizing this construction does not work and hence the result requires careful consideration. 

\vspace{0.5pc}
\textbf{Theorem A.} \textit{There exists a division algebra with infinite genus.}
\vspace{0.5pc}

This theorem can be rephrased in the language of algebraic groups.  Quaternion division algebras give rise to algebraic groups of type $A_1$ and maximal subfields give rise to maximal tori.  In the literature, algebraic $k$-groups with the same isomorphism classes of maximal tori are called \textbf{isotoral} or \textbf{weakly $k$-isomorphic}.  As a corollary to Theorem A, we prove the following.

\vspace{0.5pc}
\textbf{Theorem B.} 
\textit{There exist fields over which there are infinitely many nonisomorphic isotoral groups of type $A_1$. }
\vspace{0.5pc}

\qquad In section 4 we prove a natural strengthening of these results, and along the way introduce the notion of a \textbf{linking field extension}, which we hope will be of independent interest.  Using this we prove the following.

\vspace{0.5pc}
\textbf{Theorem C.} 
\textit{There exists a field $K$ over which
\begin{enumerate}
\item there are infinitely many nonisomorphic quaternion division algebras with center $K$, and   
\item any two quaternion division algebra with center $K$ are pairwise weakly isomorphic.
\end{enumerate}
}
\vspace{0.5pc}

In fact, there are infinitely many nonisomorphic fields which satisfy the conditions of Theorem C.


\section{Background on quaternion division algebras}

\qquad Let $k$ be a field not of characteristic 2.  In what follows, we use the following notation:
\begin{itemize}
\item $\mathrm{Quat}(k)$ is the set of isomorphism classes of quaternion algebras with center $k$
\item $\mathrm{Quat}^*(k)$ is the subset of $\mathrm{Quat}(k)$ consisting of the quaternion division algebras.  
\item If $k'/k$ is a field extension $\mathscr{E}_{k'/k}:\mathrm{Quat}(k) \to \mathrm{Quat}(k')$ is the extension of scalars map $D\mapsto D\otimes_k k'$.
\end{itemize}
Two division algebras over the same field are said to be \textbf{linked} if they share a common maximal subfield.  A field $k$ is \textbf{linked} if any two quaternion division algebras with center $k$ are linked. Local and global fields (i.e., $\Q$) are all linked (\cite{Lam} VI.3.6).   To any nonhyperbolic (regular) $r$-dimensional quadratic form $\varphi$ over $k$, we may define its \textbf{``big'' function field} to be $k[\varphi]:=\mathrm{Frac}(k[x_1, \ldots, x_r]/(\varphi))$ where $\varphi$ is expressed as a homogeneous polynomial of degree 2 in the indeterminate $x_i$, $1\le i\le r$.  An \textbf{$n$-fold Pfister form} over $k$ is a quadratic form of the form $\bigotimes_{i=1}^n\langle 1, a_i\rangle$ for $a_i\in k^\times$.  The norm form of a quaternion division algebra $D$ with center $k$ is an anisotropic $2$-fold Pfister form over $k$, which we denote $q_D=\langle 1,-a\rangle \otimes \langle 1, -b\rangle$.  The isomorphism class of $D$ is determined by $q_D$ and is represented by the \textbf{Hilbert symbol} $\left( \frac{a,b}{k}\right)$.  For more on these objects, see \cite{Lam}.

\qquad For the reader's convenience, we include the following two results on anisotropic forms.

\begin{prop}[\cite{Lam} Cor.~X.4.10]\label{anisotropicstable}Let 
\begin{enumerate}
\item $F$ be a field of characteristic not $2$, 
\item $\varphi$ be a $2^n$-dimensional quadratic form over $F$, $n\ge 1$, and
\item $q$ be an anisotropic $n$-fold Pfister form over $F$.  
\end{enumerate}Then the following are equivalent.
\begin{enumerate}
\item $q$ and $\varphi$ are Witt equivalent.  In particular if $\varphi$ is also anisotropic, they are isometric.
\item $q$ becomes isotropic over $F[\varphi]$ 
\end{enumerate}
\end{prop}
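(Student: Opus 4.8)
I would establish the two implications separately.

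$(1)\Rightarrow(2)$: Base-change condition (1) to $F[\varphi]=\mathrm{Frac}(F[x_1,\dots,x_{2^n}]/(\varphi))$, so $q$ and $\varphi$ are Witt equivalent over $F[\varphi]$. There $\varphi$ is isotropic: the image of the generic point $(x_1,\dots,x_{2^n})$ is a zero of $\varphi$, and it is nonzero since $\varphi$ is a nonzero form of degree $2$, so no $x_i$ lies in the ideal $(\varphi)$. Hence the Witt class of $\varphi$, and therefore that of $q$, is represented over $F[\varphi]$ by a form of dimension $<2^n$. But $q$ remains an $n$-fold Pfister form over $F[\varphi]$, and an anisotropic $n$-fold Pfister form has dimension exactly $2^n$; so $q$ must be isotropic over $F[\varphi]$.

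$(2)\Rightarrow(1)$: The main steps are: (a) $\varphi$ is anisotropic --- otherwise the affine quadric $\{\varphi=0\}$ has a smooth $F$-rational point, $F[\varphi]$ is purely transcendental over $F$, and isotropy of $q$ over $F[\varphi]$ would descend to $F$ (isotropy passes through purely transcendental extensions), contradicting anisotropy of $q$. (b) Being a Pfister form, $q$ remains one over $F[\varphi]$, where it is isotropic, hence hyperbolic. (c) Apply the Cassels--Pfister subform theorem: for an anisotropic Pfister form $q$ and an anisotropic form $\varphi$ of dimension $\ge 2$, $q$ becomes hyperbolic over $F[\varphi]$ if and only if $\varphi$ is similar to a subform of $q$ ($F[\varphi]$ is interchangeable here with the function field of the projective quadric $\{\varphi=0\}$, which it extends purely transcendentally). (d) Since $\dim\varphi=\dim q=2^n$, a subform of $q$ of this dimension is all of $q$, so $\varphi\cong\langle c\rangle q$ for some $c\in F^\times$. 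Thus $\varphi$ is anisotropic and Witt equivalent to a scalar multiple of $q$; and because $q$ is round --- its nonzero represented values form a group, equal to its group of similarity factors --- the scalar disappears the moment $\varphi$ represents a value of $q$, in particular once $\varphi$ represents $1$, giving $\varphi\cong q$.

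The content is concentrated in step (c): if the subform theorem may be cited, the proposition follows almost immediately and the remaining steps are routine manipulations of function fields and Witt classes. For a self-contained treatment one proves the subform theorem by induction on $\dim\varphi$, peeling off one variable at a time over successive quadratic extensions and using roundness of Pfister forms together with the Arason--Pfister Hauptsatz to keep dimensions under control; that induction is the genuinely technical point.
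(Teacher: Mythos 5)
The paper quotes this proposition from Lam without proof, so there is no in-paper argument to compare against; your route through the Cassels--Pfister subform theorem is the standard one, and your proof of $(1)\Rightarrow(2)$ and of steps (a)--(c) of $(2)\Rightarrow(1)$ is fine. The gap is in step (d). The subform theorem gives $\varphi\cong\langle c\rangle q$ for some $c\in F^\times$ (indeed for any $c\in D(\varphi)D(q)$), and you then assert that $\varphi$ ``represents $1$'' in order to absorb the scalar. Nothing in the hypotheses forces this: $\langle c\rangle q$ represents $1$ exactly when $c\in D_F(q)$, which is the very thing you would need to prove. Roundness of $q$ lets you trade $c$ for any other element of $D(\varphi)D(q)$, but it cannot make the scalar disappear.

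Moreover the gap cannot be closed, because $(2)\Rightarrow(1)$ is false as literally stated. Take $F=\Q$, $n=1$, $q=\langle 1,1\rangle$ (anisotropic, as $-1$ is not a rational square) and $\varphi=\langle 3,3\rangle=\langle 3\rangle q$. The two forms cut out the same quadric, so $F[\varphi]=F[q]\cong\Q(i)(x)$ and $q$ is isotropic there, i.e.\ (2) holds; but $q\perp-\varphi=\langle 1,1,-3,-3\rangle$ is the norm form of $\left(\frac{-1,3}{\Q}\right)$, which is anisotropic over $\Q_3$ (since $3$ has odd valuation and hence is not a norm from the unramified extension $\Q_3(i)$), so $q$ and $\varphi$ are not Witt equivalent and (1) fails. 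The correct conclusion of your own argument---and, I believe, of Lam's Cor.~X.4.10---is that $\varphi$ is \emph{similar} to $q$. That weaker conclusion is all the paper actually uses: in Proposition \ref{pushingext} the forms $q_D$, $q_{D,D'}$, $\varphi_p$ are distinguished by their discriminants, and similar forms of even dimension have equal discriminants. So your proof is essentially complete for the statement that is needed; the final unjustified normalization, which mirrors an overstatement in the quoted proposition itself, should simply be dropped.
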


\begin{thm}[\cite{Lam} Thm~X.4.34]\label{hoffmann} (Hoffmann's Theorem) Let 
\begin{enumerate}
\item $F$ be a field of characteristic not $2$, 
\item $q$ be an anisotropic quadratic form over $F$,
\item $\varphi$ a quadratic form so that $F[\varphi]$ is defined, and 
\end{enumerate}
If there exists an integer $n$ such that $\dim q\le 2^n<\dim \varphi,$
 then $q$ remains anisotropic over $F[\varphi]$.
\end{thm}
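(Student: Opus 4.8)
\emph{I would follow Hoffmann's argument: a chain of normalizations reduces everything to a Pfister-form case settled by Proposition~\ref{anisotropicstable} and the Arason--Pfister Hauptsatz.} Suppose for contradiction that $q_{F[\varphi]}$ is isotropic. Four reductions apply. (i) One may take $\varphi$ anisotropic: otherwise solving $\varphi=0$ for a coordinate makes $F[\varphi]/F$ purely transcendental, over which $q$ stays anisotropic. (ii) One may take $n\ge 1$, since a form of dimension $\le 1$ is anisotropic over every extension. (iii) One may assume $\dim q=2^n$: replace $F$ by $F'=F(t_1,\dots,t_m)$ with $m=2^n-\dim q$ and $q$ by $q\perp\langle t_1,\dots,t_m\rangle$; this keeps $\varphi$ and the enlarged $q$ anisotropic, leaves $\dim\varphi>2^n$, and keeps $q$ isotropic over $F'[\varphi]\supseteq F[\varphi]$. (iv) One may assume $\dim\varphi=2^n+1$: if it is larger, take an anisotropic subform $\varphi_0\subset\varphi$ of dimension $2^n+1$; then $\varphi$ is isotropic over $F[\varphi_0]$, so Knebusch's place criterion yields an $F$-place $F[\varphi]\to F[\varphi_0]\cup\{\infty\}$ along which isotropy of $q$ (defined over $F$) specializes, giving $q_{F[\varphi_0]}$ isotropic, while $\dim\varphi_0>\dim q$.

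The decisive step is to pass to a field extension $E/F$ over which $q_E$ becomes similar to an anisotropic $n$-fold Pfister form while $\varphi_E$ stays anisotropic; then $q_{E[\varphi]}$ is still isotropic. Granting this, relabel and rescale so that $q$ is an anisotropic $n$-fold Pfister form of dimension $2^n$, $\varphi$ is anisotropic of dimension $2^n+1$, and $q_{F[\varphi]}$ is isotropic. Choose a subform $\varphi'\subset\varphi$ of dimension $2^n$; since $\varphi$ is isotropic over $F[\varphi']$, there is an $F$-place $F[\varphi]\to F[\varphi']\cup\{\infty\}$, so $q_{F[\varphi']}$ is isotropic. By Proposition~\ref{anisotropicstable}, applied to the $2^n$-dimensional form $\varphi'$ and the anisotropic Pfister form $q$, the forms $q$ and $\varphi'$ are Witt equivalent, hence isometric (as $\varphi'$ is anisotropic); thus $\varphi\cong q\perp\langle c\rangle$ for some $c\in F^\times$. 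Since $q$ represents $1$, the $(n+1)$-fold Pfister form $\pi:=q\otimes\langle 1,c\rangle$ contains a copy of $q\perp\langle c\rangle\cong\varphi$, and $\pi$ is anisotropic (otherwise $\pi$ is hyperbolic, forcing $cq\cong-q$, hence $q$ to represent $-c$, hence $\varphi$ to be isotropic). So $\varphi$ is a Pfister neighbor of $\pi$, its dimension $2^n+1$ exceeding $\tfrac12\dim\pi=2^n$, whence $F[\varphi]$ and $F[\pi]$ admit mutual $F$-places; the Pfister form $q$, being isotropic --- hence hyperbolic --- over $F[\varphi]$, is hyperbolic over $F[\pi]$ as well, so $q$ lies in $\ker\bigl(W(F)\to W(F[\pi])\bigr)=\pi\cdot W(F)\subseteq I^{n+1}(F)$. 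But an anisotropic form of dimension $2^n<2^{n+1}$ cannot lie in $I^{n+1}(F)$, by the Arason--Pfister Hauptsatz --- a contradiction.

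What remains --- constructing $E$ so that $q_E$ becomes similar to a Pfister form without $\varphi$ turning isotropic --- is the main obstacle. A generic $2^n$-dimensional form is not similar to an $n$-fold Pfister form, a defect that survives every purely transcendental extension of the base; so honest function-field extensions are needed, precisely the kind that can render the $(2^n+1)$-dimensional form $\varphi$ isotropic. One builds $E$ as a carefully arranged tower of function fields of quadrics, each large relative to the forms being tracked, interleaved with an induction (essentially on $n$, together with a measure of how far $q$ is from being a Pfister form), so that at each stage the theorem in a strictly smaller case --- aided by the sharper statements available for Pfister forms, such as Proposition~\ref{anisotropicstable} --- guarantees that $\varphi$ survives, and so that $q_E$ does become a Pfister form. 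Making this induction well-founded and verifying the Pfister-form conclusion is the technical heart of the theorem; granted it, everything else is formal.
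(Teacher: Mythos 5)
First, a remark on scope: the paper does not prove this statement at all --- it is quoted verbatim from Lam (Thm.~X.4.34) as background, so there is no in-paper argument to compare against; your proposal is being measured against the actual content of Hoffmann's theorem. Your reductions (i)--(iv) are standard and correctly executed, and your endgame (identify $\varphi'\cong q$ via Proposition~\ref{anisotropicstable}, build the $(n+1)$-fold Pfister form $\pi=q\otimes\langle1,c\rangle$ containing $\varphi$ as a neighbor, and finish with $\ker(W(F)\to W(F[\pi]))=\pi W(F)$ plus Arason--Pfister) is sound. But the proof has a genuine gap exactly where you flag it: the ``decisive step'' of producing an extension $E/F$ over which $q_E$ is similar to an anisotropic $n$-fold Pfister form while $\varphi_E$ stays anisotropic is never carried out, and this step \emph{is} Hoffmann's theorem --- everything you actually prove is the routine part.

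Worse, the route you sketch for filling it cannot work as described. You present the construction of $E$ as a general-position device (a tower of function fields of large quadrics, controlled by an induction on $n$ and on the defect of $q$ from being Pfister), making no use of the reductio hypothesis that $q_{F[\varphi]}$ is isotropic. Such an $E$ need not exist: for $n\ge 2$, any form similar to an $n$-fold Pfister form has square discriminant, so if $d:=\mathrm{disc}(q)$ is a nonsquare then every $E$ with $q_E$ similar to a Pfister form contains $\sqrt{d}$; choosing $\varphi$ to contain the subform $\langle 1,-d\rangle$ then forces $\varphi_E$ to be isotropic. Concretely, over $F=\R(u,v,w)$ take $d_0=1+u^2+v^2+w^2$ (not a sum of three squares in $F$, by Cassels), $q=\langle 1,1,1,-d_0\rangle$ (anisotropic, discriminant $-d_0$) and $\varphi=\langle 1,d_0,1,1,1\rangle$ (anisotropic, containing $\langle 1,-(-d_0)\rangle$): no extension makes $q$ similar to a $2$-fold Pfister form while keeping $\varphi$ anisotropic. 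So your intermediate target is only conceivably reachable by exploiting the assumed isotropy of $q_{F[\varphi]}$ in an essential way, which you do not indicate how to do; note also that the ``theorem in a strictly smaller case'' cannot protect $\varphi$ here, since the first extension needed ($F(\sqrt{-d_0})$, the function field of a binary form) is one over which Hoffmann's inequality gives no control over a $5$-dimensional form. The actual proof (Hoffmann's, reproduced in Lam) threads this needle differently --- roughly, by padding $\varphi$ over a purely transcendental extension into a neighbor of a \emph{generic} anisotropic $(n+1)$-fold Pfister form and inducting, rather than by deforming $q$ into a Pfister form --- and that argument is the content you would need to supply.
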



\section{Construction of a quaternion division algebra with infinite genus}

\qquad In this section, we prove Theorem A, and as a consequence Theorem B.  We begin by proving Proposition \ref{pushingext} which is the main tool in proving Theorem A.

\begin{prop}
\label{pushingext}
Let 
\begin{enumerate}
\item $F$ be a field of characteristic not $2$, 
\item $\mathcal{D}\subset \mathrm{Quat}^*(F)$ be a set whose elements are pairwise linked,
\item $S \subset F^\times/(F^\times)^2$.
\end{enumerate}
Then there exists a field extension $F_S/F$ such that 
\begin{enumerate}
\item the restriction $\mathscr{E}_{F_S/F}|_{\mathcal{D}}:\mathcal{D}\to \mathrm{Quat}^*(F_S)$ is injective,
\item each algebra in the image $\mathscr{E}_{F_S/F}(\mathcal{D})$  contains $F_S(\sqrt{c})$ for all $c\in S$.
\end{enumerate}
\end{prop}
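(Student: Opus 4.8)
The plan is to build $F_S$ as a direct limit of a tower of function fields, iteratively adjusting the field so that each $c\in S$ becomes a square norm in every algebra of $\mathcal D$, all while preserving both the division property of each algebra and the non-isometries between distinct pairs. Fix $D_1, D_2 \in \mathcal D$ with $D_1\not\cong D_2$; since they are linked they share a common maximal subfield, say $F(\sqrt{d})$, so we may write $q_{D_1} = \langle 1,-d\rangle\otimes\langle 1,-a_1\rangle$ and $q_{D_2} = \langle 1,-d\rangle\otimes\langle 1,-a_2\rangle$. For a fixed $c\in S$, the natural move is to pass to the function field of an appropriate quadratic form $\varphi_c$ chosen so that $F(\varphi_c)$ forces $F(\sqrt c)$ into each $D_i$ (equivalently, makes $q_{D_i}\perp\langle -c\rangle$ isotropic, i.e. makes $c$ a norm from $D_i$) but keeps every $q_{D_i}$ anisotropic and keeps distinct $q_{D_i}, q_{D_j}$ non-isometric.

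First I would isolate the single-algebra, single-element step: given a division algebra $D$ over a field $L$ with norm form $q_D$, and $c\in L^\times$, I want a field extension $L'/L$ over which $q_D$ stays anisotropic but $c$ becomes a norm. The controlled way to make $c$ a norm without killing $D$ is to adjoin a square root of an element that links well — concretely, if $q_D = \langle 1,-d\rangle\otimes\langle 1,-a\rangle$ then over $L(\sqrt{c})$... no, that may isotropize $q_D$. Instead, the right object is the function field of the Pfister neighbor or the conic: one takes $L' = L(\varphi)$ where $\varphi$ is a suitable $3$-dimensional form (a Pfister neighbor of $\langle 1,-c\rangle\otimes\langle 1,-N\rangle$ for a generic $N$), arranged so that $\dim q_D = 4 \le 2^2 < \dim(\text{the form forcing new relations})$ fails — so the subtle point is that Hoffmann's theorem (Theorem \ref{hoffmann}) only protects anisotropy when $\dim q_D \le 2^n < \dim\varphi$, and here $\dim q_D = 4$, so I need $\dim\varphi \ge 5$, yet a $5$-dimensional $\varphi$ need not force $c$ into $D$. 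This tension is exactly why the naive approach fails (as the introduction warns), and resolving it is the heart of the argument: I expect one must instead adjoin $\sqrt{c\cdot e}$ for a carefully chosen $e$ that is simultaneously a norm from every $D_i\in\mathcal D$ in a way compatible with the shared linking — using that the $D_i$ are pairwise linked so their norm forms share a common slot — so that $\langle 1,-c\rangle$ and $\langle 1,-e^{-1}\rangle$ differ by something represented by all the $q_{D_i}$, making $c$ a square in each $D_i$ after the adjunction while Proposition \ref{anisotropicstable} (applied in the contrapositive: a $4$-dimensional anisotropic form staying anisotropic over $L(\sqrt{ce})$) guarantees the $q_{D_i}$ remain anisotropic.

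Next I would handle injectivity. Suppose $D_1\not\cong D_2$ in $\mathcal D$; I must ensure $q_{D_1}\not\cong q_{D_2}$ over $F_S$. By Proposition \ref{anisotropicstable}, $q_{D_1}\cong q_{D_2}$ over $F_S$ would follow from $q_{D_1}$ becoming isotropic over $F_S[q_{D_2}]$; so I want to show that at each stage of the tower, $q_{D_1}$ stays anisotropic over $L[q_{D_2}]$. The stage-by-stage extensions are function fields of low-dimensional forms, and I would track the compositum $L[q_{D_2}](\varphi_c)$: using Hoffmann's theorem with $\dim q_{D_1}=4$ one needs the forcing forms $\varphi_c$ to have dimension $\ge 5$ relative to this enlarged base field, which again pushes toward the refined choice of adjunction element above. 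The cleanest packaging is: prove a lemma that a single step $L\rightsquigarrow L'$ can be taken to simultaneously (a) make a prescribed $c$ a square in every $D_i$, (b) keep every $q_{D_i}$ anisotropic, and (c) keep $q_{D_i}$ anisotropic over $L'[q_{D_j}]$ for all $i\neq j$; then iterate over all pairs $(c, \text{nothing})$ with $c$ ranging over $S$ (well-ordered), taking direct limits at limit ordinals — anisotropy and non-isometry pass to direct limits since a witness to isotropy or isometry lives in a finitely generated subfield, hence already at some finite stage.

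The main obstacle, as flagged, is step (b)+(c) coexisting with (a): forcing $c$ into all the $D_i$ at once without making any $q_{D_i}$ isotropic or collapsing two of them together. The resolution exploits that $\mathcal D$ is a \emph{linked} family — any two $q_{D_i}$ share a common $1$-dimensional (hence common binary) subform — so that all the norm forms sit inside a controlled configuration, and one can choose the square root being adjoined to interact with $c$ only through that common part. I would devote the bulk of the proof to constructing this adjunction element explicitly and verifying the three properties via Propositions \ref{anisotropicstable} and Theorem \ref{hoffmann}, after which the transfinite iteration and direct-limit argument are routine.
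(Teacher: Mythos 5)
Your overall architecture (a transfinite tower of function fields of quadratic forms, with anisotropy and non-isometry checked at each stage and passed to the direct limit via finitely generated witnesses) matches the paper, but the single step at the heart of the construction is where your proposal has a genuine gap, and the device you reach for to fill it does not work. You propose to force $c$ into every $D_i$ by adjoining $\sqrt{ce}$ for an element $e$ that is ``simultaneously a norm from every $D_i$,'' justified by the family being pairwise linked. Pairwise linkage only gives a common slot for each \emph{pair}; for an infinite family there need be no single quadratic extension (equivalently, no common represented value of all the pure norm forms) shared by all members --- already over $\Q$ all quaternion algebras are pairwise linked yet have no common quadratic subfield. Moreover, even granting such an $e$, a quadratic extension $L(\sqrt{m})$ splits exactly those $D_i$ that contain $L(\sqrt{m})$ and identifies exactly those pairs whose product algebra contains $L(\sqrt{m})$, and you give no mechanism to rule either out. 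Your appeal to Hoffmann's theorem is also misdirected here: every form in play has dimension $4$, so the hypothesis $\dim q\le 2^n<\dim\varphi$ can never be met, and Theorem \ref{hoffmann} plays no role in this proposition (it is used later, in Proposition \ref{linkingext}, where $6$-dimensional Albert forms appear).

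The missing idea is the choice of forcing form. For each pair $(c,D)$ with $F(\sqrt{c})\not\subset D$, the paper takes $\varphi_{(c,D)}=\langle c,-a_D,-b_D,a_Db_D\rangle$, i.e., the norm form of $D$ with its leading $1$ replaced by $c$. This form is anisotropic precisely because $F(\sqrt{c})\not\subset D$; it becomes isotropic over its own function field, which is exactly the statement that $c$ is represented by $\langle a_D,b_D,-a_Db_D\rangle$, i.e., that $F(\sqrt{c})$ embeds in $D$; and, crucially, it is $4$-dimensional with discriminant $c\neq 1$. Every form that must be protected --- each $q_D$ and, for each pair, the $2$-fold Pfister form $q_{D,D'}$ similar to the anisotropic part of $q_D-q_{D'}$ (this is where linkage is actually used: it guarantees $q_{D,D'}$ is $4$-dimensional) --- is an anisotropic $2$-fold Pfister form of discriminant $1$. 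Proposition \ref{anisotropicstable} then shows none of them can become isotropic over the function field of any $\varphi_{(c,D)}$, since a $2$-fold anisotropic Pfister form becomes isotropic over the function field of a $4$-dimensional form only if the two are similar, which the discriminant comparison forbids. Your sketch never produces a forcing form of the correct dimension together with a computable obstruction to similarity, and that is exactly the ``careful consideration'' the introduction alludes to.
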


\begin{proof}
For each $D\in \mathcal{D}$, denote its norm form by $q_D= \langle 1, -a_D, -b_D, a_Db_D\rangle$, where $a_D, b_D\in F^\times$.  Recall $q_D$ is an anisotropic 2-fold Pfister form of discriminant 1.  For two algebras $D, D'\in \mathcal{D}$, let $q_{D,D'}$ be the 2-fold Pfister form which is similar to the anisotropic part of $q_D-q_{D'}$.  Note that $q_{D,D'}$ has discriminant 1.  For each $c\in S$, define $T_c=\{D\in \mathcal{D}\ | \ F(\sqrt{c}) \mbox{ is not a subfield of }D\}$.  Define the set of pairs $\mathscr{P}= \{(c,D) \ | \ \mbox{$c\in S$ and $D\in T_c$}\}$ and for each $p:=(c, D)\in \mathscr{P}$, define the quadratic form 
$$\varphi_{p}:=\varphi_{(c,D)}:=cW_p^2-a_DX_p^2 -b_DY_p^2+a_Db_DZ_p^2$$
where $W_p, X_p, Y_p, Z_p$ are indeterminants.  Note that this form has discriminant $c\neq 1$.  Furthermore, the existence of an isotropic vector for $\varphi_{(c,D)}$ would imply the existence of $x,y,z\in F$ for which $c=a_{D}x^2+by^2-a_{D}b_Dz^2$ which would contradict our assumption that $D\in T_c$.
Hence $\varphi_{(c,D)}$ is anisotropic.  
Since they have different discriminants, for no pair $(c,D)$ is $\varphi_{(c,D)}$ isometric to any $q_D$ or $q_{D,D'}$.  

Fix a well ordering $<$ on $\mathscr{P}$ and let $p_0$ denote its least element.  Define fields $F_{p_0}:=F[\varphi_{p_0}]$ and $F_p:=(\bigcup_{p'<p}F_{p'})[\varphi_p]$, $p\in \mathscr{P}$, and let $F_S:=\bigcup_{p\in \mathscr{P}}F_p$.  

\qquad We  show this is the desired field.  First note that $q_D$ and $q_{D,D'}$ remain anisotropic over $F_S$ since by Proposition \ref{anisotropicstable} they remain anisotropic over each $F_p$, $p\in \mathscr{P}$.  Note that
\begin{enumerate}
\item $q_D$ remaining anisotropic implies $D\otimes F_S$ is still a division algebra, i.e., the image $\mathscr{E}_{F_S/F}$ lies in $\mathrm{Quat}^*(F_S)$,
\item $q_{D,D'}$ remaining anisotropic implies $D\otimes F_S$ and $D'\otimes F_S$ remain nonisomorphic over $F_S$, and hence the restriction $\mathscr{E}_{F_S/F}|_{\mathcal{D}}$ is injective.
\end{enumerate}

Next, pick any two $D,D'\in \mathcal{D}$ and $c\in S$ such that $F(\sqrt{c})$ is a subfield of $D'$ but not $D$, and denote $p=(c,D)$.  Then $\varphi_p$ is isotropic over $F_p$ and hence $c=a_{D}(X_p/W_p)^2+b_D(Y_p/W_p)^2-a_{D}b_D(Z_p/W_p)^2$.  It follows that $F_p(\sqrt{c})$ embeds in $D\otimes F_p$, and hence $F_S(\sqrt{c})$ embeds in $D\otimes F_S$.
\end{proof}

\qquad Proposition \ref{pushingext} creates a field $F_S$ over which all the division algebras of $\mathscr{E}_{F_S/F}(\mathcal{D})$  have the same quadratic extensions \textit{coming from $F$}.  However in general,  the algebras of $\mathscr{E}_{F_S/F}(\mathcal{D})$ are not weakly isomorphic because there are now new quadratic extensions coming from $F_S$ which could lie in one of the extended algebras but not in one of the others.



\begin{thm}\label{theorem1}Let 
\begin{enumerate}
\item $k$ be a field of characteristic not $2$, 
\item $\mathcal{D}\subset \mathrm{Quat}^*(k)$ be a set whose elements are pairwise linked,
\end{enumerate}
Then there exists a field extension $K/k$ such that 
\begin{enumerate}
\item the restriction $\mathscr{E}_{K/k}|_{\mathcal{D}}:\mathcal{D}\to \mathrm{Quat}^*(K)$ is injective, and  
\item the algebras in the image $\mathscr{E}_{K/k}(\mathcal{D})$ are pairwise weakly isomorphic.
\end{enumerate}
\end{thm}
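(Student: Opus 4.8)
The plan is to iterate Proposition \ref{pushingext} infinitely many times, alternating between two fields, to force all the algebras to acquire each other's quadratic subextensions. The obstruction noted right after the proposition is the key difficulty: applying \ref{pushingext} once kills the discrepancies among quadratic extensions coming from $F$, but the field $F_S$ has brand-new square classes, and a quadratic extension $F_S(\sqrt{c})$ with $c\in F_S\setminus F$ may embed in one extended algebra but not another. So a single application is not enough; we need a limiting construction.

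Here is the construction I would carry out. Set $k_0 = k$ and $\mathcal{D}_0 = \mathcal{D}$. Having built $k_n$ and the (still pairwise-linked, still pairwise-nonisomorphic) family $\mathcal{D}_n = \mathscr{E}_{k_n/k_{n-1}}(\mathcal{D}_{n-1}) \subset \mathrm{Quat}^*(k_n)$, let $S_n \subset k_n^\times/(k_n^\times)^2$ be the set of \emph{all} square classes $c$ such that $k_n(\sqrt{c})$ is a maximal subfield of at least one algebra in $\mathcal{D}_n$. Apply Proposition \ref{pushingext} with $F = k_n$, this $\mathcal{D}_n$, and $S = S_n$, to obtain $k_{n+1} := (k_n)_{S_n}$. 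By part (1) of the proposition $\mathscr{E}_{k_{n+1}/k_n}|_{\mathcal{D}_n}$ is injective, so set $\mathcal{D}_{n+1} = \mathscr{E}_{k_{n+1}/k_n}(\mathcal{D}_n)$; the elements remain pairwise linked because linkage is preserved under the extension (each algebra $D\otimes k_{n+1}$ and $D'\otimes k_{n+1}$ shares the common maximal subfield they had, extended). By part (2), every algebra in $\mathcal{D}_{n+1}$ contains $k_{n+1}(\sqrt{c})$ for every $c\in S_n$. Finally put $K = \bigcup_{n\ge 0} k_n$ and $\mathcal{D}_\infty = \mathscr{E}_{K/k}(\mathcal{D})$.

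Now I would verify the two conclusions. For injectivity: for $D\ne D'$ in $\mathcal{D}$, the Pfister form $q_{D,D'}$ stays anisotropic at every finite stage (this is built into each application of \ref{pushingext}, which preserves anisotropy of all the relevant $q_{D,D'}$ by Proposition \ref{anisotropicstable}), hence stays anisotropic over the directed union $K$ since any isotropic vector would already live in some $k_n$; therefore $D\otimes K \not\cong D'\otimes K$, and likewise each $q_D$ stays anisotropic so the image lands in $\mathrm{Quat}^*(K)$. For weak isomorphism: take any two algebras $\overline{D}, \overline{D'}$ in $\mathcal{D}_\infty$ and any maximal subfield $K(\sqrt{c})$ of $\overline{D}$, $c\in K^\times$. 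Then $c\in k_n^\times$ for some $n$, and since $K(\sqrt{c})$ is a maximal subfield of $\overline{D} = (D\otimes k_n)\otimes K$, the form $\varphi_{(c,D)}$ is isotropic over $K$, hence over some $k_m$ with $m\ge n$; but $k_{m}(\sqrt{c})$ is then a subfield of $D\otimes k_m$, so $c\in S_m$, and by the defining property of $k_{m+1}$ every algebra in $\mathcal{D}_{m+1}$ — in particular the image of $D'$ — contains $k_{m+1}(\sqrt{c})$, hence $\overline{D'} \supset K(\sqrt{c})$. By symmetry $\mathrm{Max}(\overline{D}) = \mathrm{Max}(\overline{D'})$, so the algebras are pairwise weakly isomorphic.

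The main obstacle is making sure nothing is lost in the limit and that the bookkeeping of square classes across stages is consistent. The crucial point is that a square class $c$ becoming "good" (i.e.\ $k_n(\sqrt{c})$ embeddable in some algebra) is detected by isotropy of a fixed $4$-dimensional quadratic form, which is a finite-stage phenomenon and is monotone under extension; combined with the fact that Proposition \ref{pushingext} simultaneously (a) adds $\sqrt{c}$ to \emph{all} algebras for every $c$ in the chosen set and (b) never destroys anisotropy of any $q_D$ or $q_{D,D'}$, this lets the alternation converge. One should be slightly careful that the family $\mathcal{D}_n$ stays pairwise linked — but a common maximal subfield $L$ of $D, D'$ over $k_n$ extends to $L\otimes_{k_n} k_{n+1}$, which is a field (being a quadratic extension generated by $\sqrt{d}$ for some $d$ whose square class survives, as witnessed by $q_{D,D'}$ staying anisotropic) and a maximal subfield of both extended algebras; so linkage propagates and the hypothesis of \ref{pushingext} is available at every stage. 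Everything else is a routine directed-union argument.
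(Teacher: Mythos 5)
Your proposal is correct and follows essentially the same route as the paper: iterate Proposition \ref{pushingext} with $S$ drawn from the square classes of the current field $k_n$, pass to the union $K=\bigcup_n k_n$, and detect everything (anisotropy of $q_D$ and $q_{D,D'}$, isotropy of the forms witnessing embeddings of quadratic subfields) at a finite stage. Your only deviations are cosmetic — you take $S_n$ to be all square classes embedding in at least one algebra rather than only the ``discrepancy'' classes, and you are slightly more careful than the paper in allowing the embedding of $K(\sqrt{c})$, $c\in k_n$, to be witnessed only at some later stage $m\ge n$ — neither of which changes the argument.
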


\begin{proof} We construct $K$ with a tower of extensions produced in Proposition \ref{pushingext}.
Define the field $k_0=k$ and then recursively define $k_i, i\in \Z_{>0}$  to be $F_S$ after applying the theorem to $F=k_{i-1}$ for $$S=\{c\in k^\times/(k^\times)^2 \ | \mbox{  There exist $D,D'\in \mathcal{D}$ such that $F(\sqrt{c})$ is a maximal subfield of $D$ but not $D'$}\}.$$  Let $K=\bigcup_{i\ge 0} k_i$.  We now show this is the desired field.  First, by iterating the theorem, the image of $\mathscr{E}_{K/k}|_{\mathcal{D}}$ is injective into $\mathrm{Quat}^*(K)$.  Second, let $D, D'\in \mathcal{D}$ and $E$ be a maximal subfield of $D\otimes K$.  Then $E=K(\sqrt{a})$ where $a\in K$, and hence, there is some finite $n\ge 0$ such that $a\in k_n$.  Therefore $k_{n}(\sqrt{a})$ lies in $D'\otimes k_{n+1}$ and hence $E$ lies in $D'\otimes K$.
\end{proof}

\qquad For later convenience, let $P(k)$ denote the field obtained by applying Theorem \ref{theorem1} to $k$ and $\mathcal{D}=\mathrm{Quat}^*(k)$.  Let $k$ be a field with infinitely many nonisomorphic linked quaternion division algebras.  For example, $k$ could be a global field.  Then $P(k)$  has a division algebra of infinite genus, proving Theorem A.  Noting that anisotropic groups of type $A_1$ are obtained by looking at the norm 1 group of division algebras Theorem B follows.  

%
%

\section{Linked Algebras and Linking Extensions}

\qquad Having now proven Theorem A, it is natural to pose the following question: is there a field $K$ such that $\mathrm{Quat}^*(K)$ is infinite and for which \textit{any} two elements are pairwise weakly isomorphic?  Unfortunately, we cannot simply iterate the procedure in the proof of theorem A since the procedure only works for linked algebras.  Even if you start the above process with a linked field $k$, the first field in the tower $k_1$ may no longer be linked, hence we can only perform the procedure to the image of $\mathrm{Quat}^*(k)$ and not all of $\mathrm{Quat}^*(k_1)$.  In this section, we generalize our construction procedure to account for this issue.
We define a \textbf{linking extension} of $k$ to be a field extension $k'/k$ such that all division algebras in the image of $$\mathscr{E}_{k'/k}:\mathrm{Quat}^*(k)\to \mathrm{Quat}(k')$$
are pairwise linked.  There always exists a linking extension, namely the algebraic closure $\overline{k}$ of $k$.  However the image of the extension map $\mathscr{E}_{k'/k}$ might be finite even if $\mathcal{D}$ is infinite.

\begin{prop}\label{linkingext}
Let $F$ be a field of characteristic not 2.  There exists an extension $L/F$ for which
\begin{enumerate}
\item $L$ is a linking extension, and
\item Linked nonisometric algebras in $\mathrm{Quat}^*(F)$ remain nonisometric division algebras after extension.
\end{enumerate}
\end{prop}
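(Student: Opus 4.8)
The plan is to iterate a "linking one pair at a time" construction, analogous to the tower built in Proposition \ref{pushingext} and Theorem \ref{theorem1}, but now forcing pairs of algebras to \emph{share} a maximal subfield rather than forcing a prescribed splitting field into an algebra. The key observation is that two quaternion division algebras $D = \left(\frac{a,b}{F}\right)$ and $D' = \left(\frac{a',b'}{F}\right)$ are linked precisely when the $3$-fold Pfister form $\langle 1,-a,-b,ab\rangle \perp \langle -a',-b',a'b'\rangle$ (the "join" of their norm forms, up to the common $\langle 1\rangle$) is isotropic over $F$; equivalently, by the Albert form criterion, the $6$-dimensional Albert form $\langle -a,-b,ab,a',b',-a'b'\rangle$ is isotropic. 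So forcing $D$ and $D'$ to become linked over an extension amounts to passing to the function field of that Albert form (a $6$-dimensional form, hence a variety of dimension $4$). First I would, for each unordered pair $\{D,D'\}$ of nonisomorphic elements of $\mathrm{Quat}^*(F)$, let $\psi_{D,D'}$ denote this Albert form, fix a well-ordering on the set $\mathscr{Q}$ of such pairs, and build the tower $F_{\{D,D'\}} := \bigl(\bigcup_{p'<p} F_{p'}\bigr)[\psi_{D,D'}]$, setting $L := \bigcup_{p\in\mathscr{Q}} F_p$. By construction every pair in $\mathrm{Quat}^*(F)$ becomes linked over $L$, so once we check the algebras survive as division algebras, $L$ is a linking extension.

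The heart of the matter is clause (2): a linked pair of \emph{nonisometric} division algebras must remain nonisometric division algebras over $L$. For this I would invoke Hoffmann's Theorem (Theorem \ref{hoffmann}) together with Proposition \ref{anisotropicstable}, exactly in the spirit of the proof of Proposition \ref{pushingext}. For a single algebra $D$ with norm form $q_D$ of dimension $4$: whenever we pass to $F'[\psi]$ for an Albert form $\psi$ of dimension $6$, we have $\dim q_D = 4 \le 2^2 = 4 < 6 = \dim\psi$, so Hoffmann's Theorem gives that $q_D$ stays anisotropic; hence $D$ stays a division algebra over every $F_p$ and so over $L$. For a nonisometric linked pair $D,D'$: here one needs that the algebras stay \emph{distinct}. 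Since $D,D'$ are already linked over $F$, we may write $q_D = \langle 1,-a,-b,ab\rangle$ and $q_{D'} = \langle 1,-a,-c,ac\rangle$ sharing the slot $a$ (the common maximal subfield $F(\sqrt a)$); then $D \not\cong D'$ over an extension $F'$ iff the $4$-dimensional form $q_{D,D'} := \langle 1,-b,-c,bc\rangle$ — again a $2$-fold Pfister form, the one similar to the anisotropic part of $q_D - q_{D'}$ — is anisotropic over $F'$. This is once more a $4$-dimensional anisotropic form being extended to function fields of $6$-dimensional forms, so Hoffmann's Theorem applies verbatim and $q_{D,D'}$ stays anisotropic up the tower.

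There is one subtlety to handle with care. Unlike the situation in Proposition \ref{pushingext}, the forms $\psi_{D,D'}$ we are adjoining are not of dimension a power of $2$, and the collection of "test forms" $q_D$, $q_{D,D'}$ we must keep anisotropic is closed under the new linking relations only after we know the pair was linked to begin with; but here we are ranging over \emph{all} pairs, and a pair $\{D,D'\}$ that is not linked over $F$ is precisely one whose common slot does not exist yet, so the form $q_{D,D'}$ is not defined over $F$. The resolution is that clause (2) only claims preservation for pairs that \emph{are} linked over $F$, and for such pairs $q_{D,D'}$ is a genuine anisotropic $4$-dimensional form over $F$ to which the Hoffmann argument applies. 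The main obstacle, and the place to be most careful, is the interaction between successive steps of the tower: after adjoining $\psi_{D_1,D_2}$ we have possibly created new isomorphisms or zero-divisors that could wreck a \emph{different} pair; the point is that Hoffmann's bound $4 < 6$ is uniform — it does not degrade as we climb — so each relevant $q_D$ and each relevant $q_{D,D'}$ remains anisotropic at \emph{every} stage simultaneously, and passing to the union $L$ preserves anisotropy since a hypothetical isotropic vector over $L$ would already live over some $F_p$. Assembling these observations gives both conclusions.
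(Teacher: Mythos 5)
Your proposal is, in substance, the paper's own proof: adjoin function fields of the $6$-dimensional Albert forms and invoke Hoffmann's bound $4\le 2^2<6$ to keep the norm forms $q_D$ and the linkage-detecting $2$-fold Pfister forms $q_{D,D'}$ anisotropic up the tower (the paper builds the tower only over the \emph{unlinked} pairs, using the anisotropic part of $q_D-q_{D'}$, which for an unlinked pair is exactly your Albert form; including the already-linked pairs as you do is harmless since their Albert quadrics are isotropic, hence have purely transcendental function fields). One concrete correction: for $D=\left(\frac{a,b}{F}\right)$ and $D'=\left(\frac{a,c}{F}\right)$, the form detecting $D\cong D'$ is not $\langle 1,-b,-c,bc\rangle$ (the norm form of $\left(\frac{b,c}{F}\right)$) but $\langle 1,-a,-bc,abc\rangle$, the norm form of $\left(\frac{a,bc}{F}\right)$; this is the $2$-fold Pfister form similar to the anisotropic part of $q_D-q_{D'}$, which is what your verbal description correctly specifies and what the Hoffmann argument actually needs. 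Also, the $7$-dimensional form $\langle 1,-a,-b,ab\rangle\perp\langle -a',-b',a'b'\rangle$ is not a $3$-fold Pfister form, though the Albert form criterion you state alongside it is the right one.
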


\begin{proof}
As above, for each $D\in \mathcal{D}$, denote its norm form by $q_D$.  For two linked algebras $D, D'\in \mathcal{D}$, let $q_{D,D'}$ be the 2-fold Pfister form which is similar to the anisotropic part of $q_D-q_{D'}$.   For two unlinked algebras $D, D'\in \mathcal{D}$, let $\varphi_{D,D'}$ be the 6-dimensional form which is the anisotropic part of $q_D-q_{D'}$.  Let $\mathscr{P}$ be the set of pairs of unlinked algebras and let $<$ be a well-ordering of $\mathscr{P}$ with least element $(D_0, D_0')$.  Define fields $F_{(D_0,D_0')}:=F[\varphi_{(D_0,D_0')}]$ and $F_p:=(\bigcup_{p'<p}F_{p'})[\varphi_p]$, $p\in \mathscr{P}$ and let $L:=\bigcup_{p\in \mathscr{P}}F_p$. 

\qquad We now show this is the desired field.  First note that any pair $p=(D,D')\in \mathscr{P}$, they become linked in $F_p$, hence in $L$ and thus $L$ is a linking extension.  Next note that $q_D$ and $q_{D,D'}$ remain anisotropic over $L$ since by Hoffmann's Theorem (Theorem \ref{hoffmann}) they remain anisotropic over each $F_p$, $p\in \mathscr{P}$.
\end{proof}

\begin{prop}
Let $k$ be a field of characteristic not 2.  There exists an extension $L(k)/k$ for which
\begin{enumerate}
\item $L(k)$ is linked, and
\item Linked nonisometric algebras in $\mathrm{Quat}^*(k)$ remain nonisometric division algebras after extension.
\end{enumerate}
\end{prop}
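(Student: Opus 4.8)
The plan is to build $L(k)$ by iterating Proposition~\ref{linkingext} in a tower, exactly parallel to how Theorem~\ref{theorem1} is built by iterating Proposition~\ref{pushingext}. The subtlety Proposition~\ref{linkingext} addresses is that applying the linking construction to $F$ only makes the image of $\mathrm{Quat}^*(F)$ pairwise linked inside $\mathrm{Quat}^*(L)$; new quaternion algebras over $L$ (those not coming from $F$) need not be linked to anything. So one pass does not produce a linked field, and we must iterate.

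Concretely, I would set $k_0 = k$ and recursively define $k_{i+1}$ to be the field $L$ obtained by applying Proposition~\ref{linkingext} with $F = k_i$. Then set $L(k) = \bigcup_{i \ge 0} k_i$. Property (2) is immediate: if $D, D' \in \mathrm{Quat}^*(k)$ are linked and nonisometric, Proposition~\ref{linkingext}(2) keeps them nonisometric (and division) at each stage $k_i$, and $q_{D,D'}$ (or rather its relevant anisotropic part) staying anisotropic through every $k_i$ means it stays anisotropic over the union $L(k)$, so the images remain nonisometric there. For property (1), I would argue as in Theorem~\ref{theorem1}: take any two $D, D' \in \mathrm{Quat}^*(L(k))$. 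Each is defined by finitely many parameters, so there is a finite $n$ with $D, D'$ both defined over $k_n$ (i.e.\ in the image of $\mathscr{E}_{k_n/k_n}$ trivially, or more precisely their Hilbert symbols have entries in $k_n$). Then at the next stage $k_{n+1}$, Proposition~\ref{linkingext}(1) makes their extensions to $k_{n+1}$ linked, i.e.\ they share a common maximal subfield $k_{n+1}(\sqrt{a})$; this same field extends up to $L(k)(\sqrt{a})$, a common maximal subfield of the two algebras over $L(k)$. Hence $L(k)$ is linked.

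The one point requiring care—and the main obstacle—is the direct-limit behavior of the linking property. Being \emph{linked as a field} is a statement quantifying over \emph{all} pairs of quaternion division algebras over the limit field $L(k)$, and a priori there could be a pair over $L(k)$ that is ``spread out'' over infinitely many stages. The resolution is the finite-generation observation above: a quaternion algebra over $L(k)$ is given by a Hilbert symbol $\left(\frac{a,b}{L(k)}\right)$ with $a, b \in L(k)$, and since $L(k) = \bigcup_i k_i$ is an increasing union, any such $a, b$ already lie in some $k_n$. So every pair over $L(k)$ descends to some finite stage, and one further application of Proposition~\ref{linkingext} links them. I would also note, as the proposition does implicitly, that one should track anisotropy of the relevant forms through the whole tower: anisotropy over each $k_i$ passes to the union because any isotropic vector over $L(k)$ would have coordinates in some $k_i$, contradicting anisotropy there. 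With these two ``finiteness descends to a stage'' arguments in hand, both properties follow and the proof is complete.
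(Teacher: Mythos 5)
Your proposal is correct and follows essentially the same route as the paper: the same tower $k_0=k$, $k_{i+1}=L(k_i)$ from Proposition~\ref{linkingext}, the union $L(k)=\bigcup_i k_i$, the Hilbert-symbol descent of any pair over $L(k)$ to a finite stage $k_n$ so that $k_{n+1}$ links them, and the observation that anisotropy and linkage pass to the direct limit. No substantive differences.
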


\begin{proof}  We construct $L(k)$ with a tower of linking extensions.
Define the field $k_0=k$ and then recursively define $k_i, i\in \Z_{>0}$,  to be $L$ obtained after applying proposition \ref{linkingext} to $F=k_{i-1}$.  Let $L(k)=\bigcup_{i\ge 0} k_i$.  We now show this is the desired field.  First, by iterating the proposition, linked nonisometric algebras remain nonisometric division algebras after extension.  Second, let $D, D'\in \mathrm{Quat}^*(L(k))$.  Then,  using Hilbert symbols (see Section 2 and \cite{Lam}), there exists $a,b,a',b'\in L$ such that $D=\left(\frac{a,b}{L(k)}\right)$ and $D'=\left(\frac{a',b'}{L(k)}\right)$.  Hence there exists some finite $n\ge 0$ such that $a,b,a',b'\in k_n$ and hence over $k_{n+1}$, $D=\left(\frac{a,b}{k_{n+1}}\right)$ and $D'=\left(\frac{a',b'}{k_{n+1}}\right)$ are linked.  Since being linked is stable under extension, it follows $D$ and $D'$ are linked.
\end{proof}

Putting together this procedure and the procedure of section 3 yields Theorem C.  
\begin{thm}There exists a field $K$ for which
\begin{enumerate}
\item $\mathrm{Quat}^*(K)$ is infinite, and   
\item the algebras in $\mathrm{Quat}^*(K)$ are pairwise weakly isomorphic.
\end{enumerate}
\end{thm}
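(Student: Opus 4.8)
The plan is to combine the two towers we have just built: the linking tower $L(\cdot)$ of this section, which makes a field linked, and the weak-isomorphism tower $P(\cdot)$ of Section 3, which makes a prescribed family of pairwise linked algebras pairwise weakly isomorphic. Neither alone suffices: $P$ only acts on a set of pairwise linked algebras, and $L(k)$ need not retain that its extended algebras stay weakly isomorphic; meanwhile $L$ destroys no nonisomorphisms but by itself does nothing toward weak isomorphism. So I would interleave them. Start with $k$ a global field, so $\mathrm{Quat}^*(k)$ is infinite and already linked (\cite{Lam} VI.3.6), and all these algebras are pairwise nonisometric. Set $k_0 = k$, and recursively define $k_{2i+1} = P(k_{2i})$ applied to $\mathcal{D} = \mathscr{E}_{k_{2i}/k_0}(\mathrm{Quat}^*(k_0))$ — the image of the original infinite linked family, which by injectivity of the iterated scalar-extension maps is still infinite and, being the image of a linked family, is still pairwise linked — and then $k_{2i+2} = L(k_{2i+1})$. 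Let $K = \bigcup_{i \ge 0} k_i$.

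The key steps, in order, are: first, check that at every even stage $k_{2i}$ the image of $\mathrm{Quat}^*(k_0)$ remains an infinite set of pairwise linked, pairwise nonisometric division algebras — linkedness of the image is automatic since it is the image of a linked family, nonisometry and division-algebra-ness follow because Proposition \ref{pushingext} / Theorem \ref{theorem1} keep $q_D$ and $q_{D,D'}$ anisotropic and the linking proposition does the same via Hoffmann's Theorem. Second, apply $P$ at the odd stages to force weak isomorphism of the images. Third — and this is the real point of threading $L$ back in — observe that although $k_{2i+1}$ need not be linked globally, the particular subfamily $\mathscr{E}(\mathrm{Quat}^*(k_0))$ we carry along \emph{is} pairwise linked, so $P$ is legitimately applicable to it; we never need $P$ to act on all of $\mathrm{Quat}^*$. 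Fourth, a direct-limit argument: any $D, D' \in \mathrm{Quat}^*(K)$ are defined by Hilbert symbols with entries in some finite $k_n$, hence lie in the image of $\mathrm{Quat}^*(k_0)$'s extension only if they came from $k_0$ — which is not guaranteed — so here one must argue more carefully. Actually the cleanest fix is: take $K$ to be the output of the \emph{combined} tower where at stage $L$ we also make everything linked, so that by the time we reach $K$, \emph{every} algebra over $K$ descends to a linked pair at some finite stage and then Theorem \ref{theorem1}'s argument applies verbatim at the next stage to show their maximal subfields agree. So set $k_{i+1}$ alternately to $L(k_i)$ then $P(k_i)$ applied to the \emph{full} $\mathrm{Quat}^*(k_i)$ (which is legitimate precisely because the preceding $L$-step made $k_i$ linked), and $K = \bigcup k_i$.

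With that arrangement the verification runs: (1) $\mathrm{Quat}^*(K)$ is infinite because the infinitely many nonisometric algebras from the global field $k_0$ stay nonisometric up the tower — each $L$-step preserves nonisometry of linked algebras by Hoffmann, each $P$-step preserves injectivity of scalar extension by Theorem \ref{theorem1}(1) — and pairwise nonisometry passes to the direct limit $K$ since an isomorphism over $K$ would already be visible over some finite $k_n$. (2) Given $D, D' \in \mathrm{Quat}^*(K)$ with Hilbert symbols over some $k_n$, pass to a later stage $k_m$ that is the output of an $L$-step, where $D, D'$ are linked; then the subsequent $P$-step applied to $\mathrm{Quat}^*(k_m)$ makes their images pairwise weakly isomorphic, and by the direct-limit argument of Theorem \ref{theorem1}(2), every maximal subfield $K(\sqrt a)$ of $D$ has $a \in k_\ell$ for some $\ell$, hence $k_\ell(\sqrt a)$ embeds in $D' \otimes k_{\ell+1}$ and so $K(\sqrt a)$ embeds in $D' \otimes K$; symmetrically for $D'$, giving $\mathrm{Max}(D \otimes K) = \mathrm{Max}(D' \otimes K)$.

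The main obstacle is bookkeeping the two interleaved towers so that each application is to a genuinely pairwise-linked input: $P$ requires a linked family, and after $P$ the field is generally not linked, so one cannot chain $P$ with $P$; one must sandwich an $L$-step to re-linkify before the next $P$-step, and one must check that the $L$-step does not re-introduce isometries among algebras we want kept apart (it does not, by the second clause of the linking proposition together with Hoffmann's Theorem). The subtlety noted in the paragraph after Proposition \ref{pushingext} — that $P$ only controls quadratic extensions \emph{coming from} the base field, while new extensions appear over $F_S$ — is exactly why Theorem \ref{theorem1} already iterates $P$ infinitely within itself, and the same self-iteration device must be respected inside each $P$-block of the combined tower; once that is in place the final direct-limit collapse is routine.
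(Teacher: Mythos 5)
Your final construction---alternating $L$-steps and $P$-steps, with each $P$ applied to the \emph{full} $\mathrm{Quat}^*$ of the linked field produced by the preceding $L$-step, then taking the union---is exactly the paper's tower $k_i = P(L(k_{i-1}))$, $K=\bigcup_i k_i$, and your verification (infinitude via preservation of nonisometry at each stage plus a direct-limit argument, weak isomorphism via descending Hilbert symbols and maximal subfields to a finite stage) matches the paper's proof. The proposal is correct and takes essentially the same approach; the only wrinkle is the detour through the ``carry along only the image of $\mathrm{Quat}^*(k_0)$'' variant, which you rightly discard before settling on the correct bookkeeping.
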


\begin{proof}
Let $k$ be a field with infinitely many nonisomorphic linked quaternion division algebras.  For example, $k$ could be a number field.  Set $k_0=k$ and then recursively define $k_i, i\in \Z_{>0}$  to be $P(L(k_{i-1}))$.  Let $K=\bigcup_{i\ge 0} k_i$.  I now show this is the desired field.  First, at each step, the image of nonisomorphic linked division algebras remain nonlinked division algebras, and hence by the initial assumption on $k$,  $\mathrm{Quat}^*(K)$ is infinite.   Second, let $D, D'\in \mathrm{Quat}(K)$, $D\neq D'$, and let $E$ be a maximal subfield of $D$.  Then $E=K(\sqrt{c})$ where $c\in K$.  Furthermore $D'=\left(\frac{a,b}{K}\right)$ for some $a, b\in K$.   Hence, there is some finite $n\ge 0$ such that $a,b,c\in k_n$ and hence $k_{n}(\sqrt{c})$ lies in $\left(\frac{a,b}{k_{n+1}}\right)$ and hence $E$ lies in $D'$.
\end{proof}

\begin{cor}
There exist infinitely many nonisomorphic fields which satisfy properties (1) and (2).
\end{cor}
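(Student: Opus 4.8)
The plan is to realize properties (1) and (2) in every characteristic other than $2$, and then to invoke the elementary fact that fields of distinct characteristic cannot be isomorphic.

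First I would observe that the proof of the theorem above uses only the following about its seed field $k$: that $\mathrm{char}\,k \neq 2$ and that $k$ admits infinitely many pairwise non-isomorphic, pairwise linked quaternion division algebras. Indeed, nothing in the construction --- Propositions \ref{pushingext} and \ref{linkingext}, Theorem \ref{theorem1}, the derived operations $P(\,\cdot\,)$ and $L(\,\cdot\,)$, and the passages to direct limits --- is sensitive to the characteristic; the remark in that proof that one ``could'' take $k$ a number field is merely one convenient choice of seed. So it suffices to exhibit, for each odd prime $p$, a seed field of characteristic $p$ with the required property, the case of characteristic $0$ being already handled by $k = \Q$.

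Next, for an odd prime $p$ I would take $k = \mathbb{F}_p(t)$. This is a global field, hence linked (\cite{Lam}; the linkedness of global fields holds in positive characteristic as well, by the same local--global argument --- two quaternion division algebras over a global field share a common splitting field of the form $k(\sqrt{d})$, which one produces by approximating the prescribed local splitting fields at the finitely many ramified places). Moreover $\mathbb{F}_p(t)$ carries infinitely many pairwise non-isomorphic quaternion division algebras: by the description of the $2$-torsion of $\mathrm{Br}(\mathbb{F}_p(t))$, such an algebra is determined by its set of ramified places (finite, of even cardinality) and is a division algebra as soon as that set is nonempty, so infinitely many non-isomorphic ones occur. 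Hence $\mathbb{F}_p(t)$ is an admissible seed in characteristic $p$.

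Finally, feeding $\Q$ and each $\mathbb{F}_p(t)$ into the construction of the theorem yields a field $K_0$ of characteristic $0$ and fields $K_p \supseteq \mathbb{F}_p(t)$ of characteristic $p$, each satisfying (1) and (2). Since $K_p$ contains $\mathbb{F}_p(t)$, it has characteristic $p$, so $K_0$ together with the $K_p$, $p$ an odd prime, form an infinite family of pairwise non-isomorphic fields satisfying (1) and (2), proving the corollary. The one step deserving real care is the input just checked for $\mathbb{F}_p(t)$ --- that ``global field'' in the cited linkedness statement may be taken to include function fields of curves over finite fields, and that the $2$-torsion of $\mathrm{Br}(\mathbb{F}_p(t))$ contains infinitely many classes of division algebras; both are standard facts in the arithmetic of such fields, and everything else in the argument is immediate.
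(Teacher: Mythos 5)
Your proof is correct and follows essentially the same route as the paper: the paper likewise takes, for each odd prime $p$, a global field of characteristic $p$ as the seed, runs the construction of the theorem, and distinguishes the resulting fields by their characteristics. The extra details you supply (that $\mathbb{F}_p(t)$ is linked and carries infinitely many quaternion division algebras) are correct and merely make explicit what the paper leaves to the cited facts about global fields.
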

\begin{proof}For each odd prime $p$, let $k_p$ be a global field of characteristic $p$.  Let $K_p$ be the field obtained by performing to $k_p$ the procedure in the proof of the theorem.  Then each $K_p$ satisfies properties (1) and (2), and since any two are of different characteristic, they are not isomorphic.
\end{proof}

\section{Acknowledgments} 
\qquad We thank Andrei Rapinchuk for suggesting this problem and Matthew Stover for countless helpful conversations.  The author was supported by the NSF RTG grant 1045119.

\end{document}